\documentclass[12pt]{amsart}

\usepackage{amssymb, amscd, txfonts}
\usepackage{graphicx}
\usepackage[all]{xy} 

\numberwithin{equation}{section}

\newtheorem{theorem}{Theorem}[section]

\newtheorem{lemma}[theorem]{Lemma}

\theoremstyle{definition}

\theoremstyle{remark}
\newtheorem{remark}[theorem]{Remark}

\newcommand{\R}{\mathbb{R}}
\newcommand{\C}{\mathbb{C}}

\begin{document}

%%%%%%% Title %%%%%%%%%%%%%%%%%%%%%%%%
\title[]{Square-integrability of holomorphic differential forms on locally symmetric varieties}
\author[]{Shouhei Ma}
%\thanks{} 
\address{Department~of~Mathematics, Institute~of~Science~Tokyo, Tokyo 152-8551, Japan}
\email{ma@math.titech.ac.jp}
%\subjclass[2020]{11F75}
%\keywords{} 
%\dedicatory{}

\begin{abstract}
We prove that every holomorphic differential form of non-top degree 
on a locally symmetric variety with Baily-Borel boundary of codimension $>1$ 
is square-integrable with respect to the invariant metric. 
This generalizes a theorem of Weissauer in the Siegel modular case. 
\end{abstract}

\maketitle

\section{Introduction}

Let $\mathcal{D}$ be a Hermitian symmetric domain and 
$\Gamma$ be an arithmetic subgroup of ${\rm Aut}(\mathcal{D})$, 
which we may assume to be neat without loss of generality for our purpose. 
The quotient $X=\mathcal{D}/\Gamma$ has the structure of a smooth quasi-projective variety. 
We assume that the boundary of the Baily-Borel compactification of $X$ 
has codimension $>1$. 
The invariant Hermitian metric on $\mathcal{D}$ induces 
a natural complete K\"ahler metric on $X$. 
In this note we prove the following. 

\begin{theorem}\label{main}
Every holomorphic differential form of degree $< \dim X$ on $X$ 
is square-integrable with respect to the invariant metric. 
\end{theorem}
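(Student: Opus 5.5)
We localize near the boundary and reduce to an estimate in Siegel domain coordinates, as in Weissauer's Siegel modular argument. Let $\omega$ be a holomorphic $k$-form on $X$ with $k<n=\dim X$. By Koecher's principle, valid because the Baily-Borel boundary has codimension $>1$, $\omega$ extends holomorphically over any toroidal compactification $\overline{X}$ with at most logarithmic-type growth. More precisely, near each boundary stratum the Fourier--Jacobi expansion of $\omega$ involves only characters with nonnegative exponents. Square-integrability is local on $\overline{X}$, and away from the boundary the integral is finite by compactness. So it suffices to bound $\int |\omega|^2\,dV$ on a Siegel set attached to each rational boundary component $F$, that is, on a neighborhood of a cusp.

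The main steps run as follows. First, we fix a maximal parabolic $P$ corresponding to $F$ and write $\mathcal{D}$ as a Siegel domain of the third kind, with coordinates $(z,u,t)\in U(F)_{\C}\times V\times F$. On a Siegel set these are $y=\mathrm{Im}\,z$ ranging in a translate of a cone in $C(F)$ with $y\to\infty$, $u$ bounded modulo the lattice, and $t$ in a compact set. Second, we write the invariant metric up to bounded factors (the standard comparison of Borel and Mumford). In the $z$-direction it is the cone metric, roughly $\sum dy_i^2/y_i^2$ in the diagonalized cone coordinates. In the $u$-direction it is of order $|du|^2\cdot |y|^{-1}$-type weights, and in the $t$-direction it is bounded. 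Third, we expand $\omega=\sum_I f_I\,dw_I$ in these coordinates. Invariance under the unipotent lattice $\Gamma\cap U(F)$ gives Fourier expansions $f_I=\sum_{\lambda} a_\lambda(u,t)e^{2\pi i(\lambda,z)}$. Koecher's principle forces $\lambda\in \overline{C(F)}^\vee$, and the only terms not exponentially decaying are those with $\lambda=0$ on the relevant face. Exponentially decaying terms are trivially integrable against the polynomial volume growth. Fourth, we compute the pointwise norm $|dw_I|^2$ and the volume form, which grows like $\det(y)^{-(\dots)}\prod dy$. The claim is that the constant-term part is integrable as long as $dw_I$ does not contain the full top-degree factor $dz_1\wedge\cdots\wedge dz_r$ together with everything else. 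Each $dz_j$ contributes $|dz_j|^2\sim y_j^2$, and the volume $\sim \prod y_j^{-2}dy_j$ times lower-order weights. The integral then diverges only when all cone directions appear, together with a compensating full set of $du$'s.

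I expect the main obstacle to be this last bookkeeping, and especially the proof that a constant-term component with $k<n$ cannot saturate the weights. The plan here is to use invariance under the Levi part $\Gamma\cap G_\ell(F)$, acting on the cone by automorphisms, to show that a $\Gamma$-invariant holomorphic form whose constant term involves $dz$'s must have coefficients that are relatively invariant polynomials in $u$ of bounded degree. The only such forms compatible with the weights are multiples of the form $dz_1\wedge\cdots\wedge dz_r\wedge du_1\wedge\cdots\wedge du_m\wedge(\text{top form on }F)$, which has degree $n$. This is the analogue of Weissauer's observation that the $\mathrm{GL}$-invariant forms on the cone in non-top degree vanish. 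A cleaner route I would try first is to argue via the Mumford–Hirzebruch proportionality framework. By Mumford, $\omega$ defines a section of $\Omega^k_{\overline{X}}(\log D)$, and a log form lies in $L^2$ for a Poincaré-type metric precisely when its residues along $D$ vanish. For $k<n$, the residue of $\omega$ along a boundary divisor is a holomorphic form on the boundary stratum, a torus bundle over a lower-dimensional locally symmetric variety. There it must be invariant under the Levi group action. Codimension $>1$ of the Baily-Borel boundary makes this residue vanish, again by Koecher-type vanishing of the nonnegative Fourier coefficients on the face. Thus $\omega\in\Omega^k_{\overline{X}}$ has no log poles, and holomorphic forms without log poles are $L^2$ for Poincaré-growth metrics.
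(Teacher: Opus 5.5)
Both of your routes stall at the same step, and that step is the whole content of the theorem. Phrase it as the vanishing, for $k<n$, of the non-decaying Fourier--Jacobi terms that involve $dz$'s, or as the vanishing of the residues of $\omega\in H^0(\Omega^k_{\overline X}(\log D))$. Either way you need $H^0(\Omega^k_X)=H^0(\Omega^k_{\overline X})$, which is the Pommerening extension theorem. The paper quotes it, and remarks that, given the rest of the argument, the theorem is equivalent to it. You neither cite nor prove it, and none of the mechanisms you offer delivers it.

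\begin{itemize}
\item Codimension $>1$ is already used up by Koecher's principle, which yields log poles at worst, and it says nothing about residues. Under the same hypothesis residues do occur in degree $n$: non-cusp forms of canonical weight exist, e.g.\ parallel weight $2$ Eisenstein series on Hilbert modular surfaces. So $k<n$ must enter essentially, and your sketch never shows where.
\item The condition $\lambda\in\overline{C(F)}^\vee$ only confines the support of the expansion. It does not touch the terms on a face (in particular $\lambda=0$), which are what produce the residue.
\item Levi invariance cannot be the general mechanism. For ball quotients the image of $\Gamma$ in $G_\ell(F)$ is finite. There the vanishing of a constant term $c\,dz\wedge du_I$ with $|I|<n-1$ comes from the Heisenberg translations $u\mapsto u+a$, which shear $dz\mapsto dz+2i\sum_j\bar a_j\,du_j$. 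This forces the compensating $du_J$-coefficients to be affine in $u$ with increments both $\C$-linear and $\C$-antilinear in $a$, hence $c=0$.
\item Your weight count is wrong. On the ball, with $\rho=\mathrm{Im}\,z-|u|^2$, the invariant metric is $\rho^{-2}|\partial\rho|^2+\rho^{-1}|du|^2$. So $|dz\wedge du_I|^2\sim\rho^{|I|+2}$ against $dV\sim\rho^{-n-1}\,d\rho$, and the integral diverges already for $|I|=n-2$, i.e.\ in degree $n-1$, not only with a full set of $du$'s. You therefore need genuine vanishing of these terms, not integrability.
\end{itemize}

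The last step of your second route, ``no log poles $\Rightarrow$ $L^2$'', is also unsupported. The invariant metric is not quasi-isometric to a Poincar\'e metric on $(\overline X,D)$: it degenerates along the boundary, as the $\rho^{-1}|du|^2$ term shows. The residue criterion in fact fails for it. The same computation shows that the local log form $\frac{dq}{q}\wedge du_I$ with nonzero residue is locally $L^2$ whenever $|I|\le n-3$. Mumford's Poincar\'e growth is only an upper bound by unspecified powers of $\log|q_i|$, and it cannot decide convergence of integrals like $\int\rho^{a}\,d\rho$. You would have to redo exact weight bookkeeping on Siegel sets for every boundary component.

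The paper needs no estimate at all:
\begin{itemize}
\item By Harris--Zucker, $H^k_{(2)}(X)\to H^k(X)$ is a morphism of mixed Hodge structures with image $W_kH^k(X)$. Combined with the Hodge decomposition of $L^2$-cohomology, $H^{k,0}_{(2)}(X)=H^0_{(2)}(\Omega^k_X)$, and strictness, this gives $H^0_{(2)}(\Omega^k_X)\twoheadrightarrow F^kW_kH^k(X)$.
\item Koecher's principle and Mumford's canonical extension give $H^0(\Omega^k_X)\simeq F^kH^k(X)$, compatibly with the previous map.
\item Pommerening gives $F^kH^k(X)=F^kW_kH^k(X)$ for $k<n$, so the inclusion $H^0_{(2)}(\Omega^k_X)\subset H^0(\Omega^k_X)$ is an equality.
\end{itemize}
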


In the case of Siegel modular varieties, 
this was proved by Weissauer (\cite{We1} Satz 7) 
as a consequence of his classification of square-integrable Siegel modular forms. 
In other (higher-dimensional) cases, Theorem \ref{main} seems to be new. 
%for example, in the orthogonal case, it is not covered by \cite{Ma2} Chapter 10. 

Experience shows, at least in typical cases, that 
holomorphic differential forms of non-top degree correspond to 
vector-valued modular forms whose weight is so ``small'' that 
the automorphic representations they generate are no longer holomorphic discrete series. 
Theorem \ref{main} can be interpreted as saying that 
modular forms of such a small weight (of specific type) are always square-integrable. 
It ensures that the theory of discrete automorphic representations can be used to 
fully capture holomorphic differential forms. 
By contrast, in the case of top degree, 
square-integrability is equivalent to cuspidality (Remark \ref{rmk2}), 
and canonical forms with this property generate holomorphic discrete series. 

The proof of Theorem \ref{main} does not involve estimate of the $L^2$-norm, however. 
Instead, by making use of the mixed Hodge structure and the $L^2$-cohomology of $X$, 
Theorem \ref{main} is derived as a simple consequence of the Pommerening extension theorem \cite{Po}. 
%In a sense, Theorem \ref{main} could have been noticed earlier, but we could not find it in the literatures. 

I would like to thank Shuji Horinaga for asking this question in the orthogonal case.

\section{Proof}

\subsection{$L^2$-cohomology}\label{ssec: L2}

We begin with a general remark on $L^2$-cohomology. 
Let $X$ be a complete K\"ahler manifold whose $L^2$-cohomology 
$H^{k}_{(2)}(X)=H^{k}_{(2)}(X, {\C})$ in degree $k$ is finite-dimensional. 
Then, via harmonic forms, we have the Hodge decomposition (\cite{SZ} Theorem 1.2) 
\begin{equation}\label{eqn: Hodge}
H^{k}_{(2)}(X) = \bigoplus_{p+q=k} H^{p,q}_{(2)}(X), 
\end{equation}
where $H^{p,q}_{(2)}(X)$ is the $L^2$-$\bar{\partial}$-cohomology in degree $(p, q)$. 
This decomposition endows $H^{k}_{(2)}(X)$ with a real Hodge structure. 

On the other hand, we write 
$H^0_{(2)}(\Omega_{X}^{k})$ for the space of holomorphic $k$-forms on $X$ 
which are square-integrable with respect to the given metric. 
The following is well-known. 

\begin{lemma}\label{lem1}
We have $H^{k,0}_{(2)}(X)=H^0_{(2)}(\Omega_{X}^{k})$. 
\end{lemma}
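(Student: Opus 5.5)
The plan is to compare the two spaces through the $L^2$-Dolbeault complex in bidegrees $(k,\ast)$. Recall that $H^{k,0}_{(2)}(X)$ is the kernel of the maximal (weak) extension of $\bar\partial$ on $L^2$ $(k,0)$-forms, since there is nothing of bidegree $(k,-1)$ to divide out. So $H^{k,0}_{(2)}(X)$ is the space of $L^2$ $(k,0)$-forms $\omega$ with $\bar\partial\omega=0$ in the sense of distributions.

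First I will identify this kernel with $H^0_{(2)}(\Omega^k_X)$.

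\begin{itemize}
\item If $\omega$ is a holomorphic $k$-form that is square-integrable, then $\omega$ is smooth with $\bar\partial\omega=0$ pointwise. In particular $\bar\partial\omega=0$ weakly, so $\omega$ lies in the domain of $\bar\partial_{\max}$ and in its kernel.
\item Conversely, suppose $\omega\in L^2$ of type $(k,0)$ satisfies $\bar\partial\omega=0$ weakly. In local holomorphic coordinates write $\omega=\sum_I f_I\,dz_I$. The equation says $\partial f_I/\partial\bar z_j=0$ in the distributional sense, with each $f_I\in L^2_{\rm loc}$. By ellipticity of $\bar\partial$ on functions (Weyl's lemma for the Cauchy--Riemann system, e.g.\ $\Delta f_I=4\sum_j\partial_j\bar\partial_j f_I=0$ weakly), each $f_I$ is a holomorphic function. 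Hence $\omega$ is a holomorphic $k$-form lying in $L^2$.
\end{itemize}

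Thus, as sets of forms, $\ker(\bar\partial_{\max})$ in degree $(k,0)$ equals $H^0_{(2)}(\Omega^k_X)$.

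The remaining point is to make sure the statement refers to this reduced/unreduced space and not to harmonic representatives only. With finite-dimensionality and completeness, as in \cite{SZ}, $H^{p,q}_{(2)}$ is represented by $L^2$ harmonic forms, i.e.\ forms annihilated by $\bar\partial$ and $\bar\partial^*$. In degree $(k,0)$ there is no image to quotient by, so cohomology equals kernel.

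It is still worth checking that each $L^2$ holomorphic $k$-form is $\bar\partial^*$-closed, since $\bar\partial^*$ of a $(k,0)$-form is zero trivially by type. Hence it is also $\Delta_{\bar\partial}$-harmonic. Completeness of the Kähler metric gives $\Delta_d=2\Delta_{\bar\partial}$ with matching domains, so these forms are also $d$-harmonic. This is consistent with the decomposition (\ref{eqn: Hodge}).

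The only mildly delicate step is the regularity claim in the converse direction: weak $\bar\partial$-closedness of an $L^2_{\rm loc}$ $(k,0)$-form implies holomorphy. I would handle this locally via hypoellipticity of $\bar\partial$ in bidegree $(k,0)$, as above.
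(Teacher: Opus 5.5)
Your proposal is correct and follows the paper's argument: in bidegree $(k,0)$ there are no nonzero $\bar\partial$-exact forms, so $H^{k,0}_{(2)}(X)$ is the space of square-integrable $\bar\partial$-closed $(k,0)$-forms, and these are exactly the square-integrable holomorphic $k$-forms. The paper also remarks on your harmonic-forms check, and your Weyl's lemma step spells out the regularity the paper leaves implicit when $\bar\partial\omega=0$ is read in the weak sense.
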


\begin{proof}
For degree reason, there is no nonzero $\bar{\partial}$-exact form in degree $(k, 0)$, 
so $H^{k,0}_{(2)}(X)$ is the space of square-integrable $(k, 0)$-forms $\omega$ with $\bar{\partial}\omega=0$. 
The condition $\bar{\partial}\omega=0$ is just the holomorphicity of $\omega$. 
\end{proof}

We can also see Lemma \ref{lem1} via harmonic forms, 
noticing that a $(k, 0)$-form is harmonic if and only if it is holomorphic 
(as the adjoint of $\bar{\partial}$ is zero for degree reason).

\subsection{Proof of Theorem \ref{main}}

We go back to our locally symmetric variety $X=\mathcal{D}/\Gamma$ endowed with the invariant metric. 
Clearly this metric is complete. 
Moreover, $H^{k}_{(2)}(X)$ is isomorphic to the intersection cohomology of 
the Baily-Borel compactification $X^{bb}$ (\cite{Lo}, \cite{SS}), 
the latter being finite-dimensional by the projectivity of $X^{bb}$. 
Hence $X$ satisfies the assumption in \S \ref{ssec: L2}. 

We write $n=\dim X$. 
What has to be shown is $H^0_{(2)}(\Omega_{X}^{k})=H^0(\Omega_{X}^{k})$ for $k<n$. 
For the moment we allow $k\leq n$. 
By Lemma \ref{lem1}, we can identify $H^0_{(2)}(\Omega_{X}^{k})$ with 
the $(k, 0)$-component of $H^k_{(2)}(X)$. 
We consider the natural map 
\begin{equation}\label{eqn: L2 to ordinary}
H^{k}_{(2)}(X) \to H^k(X) 
\end{equation}
from the $L^2$-cohomology to the ordinary cohomology. 
Let $(F^{\bullet}, W_{\bullet})$ be the Hodge and weight filtrations in the mixed Hodge structure on $H^k(X)$. 
By Harris-Zucker (\cite{HZ} Chapter 5), 
the map \eqref{eqn: L2 to ordinary} is a morphism of mixed Hodge structures, 
and the image is $W_kH^k(X)$. 
(See also \cite{We2} Folgerung 4.3 for the latter.) 
Thus \eqref{eqn: L2 to ordinary} gives a surjective morphism 
$H^{k}_{(2)}(X) \twoheadrightarrow W_kH^k(X)$ 
of pure ${\R}$-Hodge structures. 
By the strictness of morphisms of Hodge structures, 
we obtain the surjective map 
\begin{equation}\label{eqn: L2 to ordinary Fk}
H^0_{(2)}(\Omega_{X}^{k}) = H^{k,0}_{(2)}(X) \twoheadrightarrow F^kW_kH^k(X). 
\end{equation}
For the proof of Theorem \ref{main} 
it is enough to know this surjectivity, 
but it will be more informative to know the following. 

\begin{lemma}\label{lem2}
The map \eqref{eqn: L2 to ordinary Fk} is an isomorphism 
and extends to an isomorphism $H^0(\Omega_{X}^{k}) \to F^kH^k(X)$. 
\end{lemma}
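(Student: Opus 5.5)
Set up the commutative square
\[
\begin{CD}
H^0_{(2)}(\Omega_X^k) @>>> F^kW_kH^k(X) \\
@VVV @VVV \\
H^0(\Omega_X^k) @>>> F^kH^k(X),
\end{CD}
\]
where the left vertical arrow is the inclusion of square-integrable holomorphic forms, the right one is the inclusion, and the bottom arrow sends a holomorphic $k$-form to its de Rham class. The square commutes because \eqref{eqn: L2 to ordinary} is induced by regarding a closed $L^2$-form as a closed form. The statement to prove is that the bottom arrow is an isomorphism. Granting this, the top arrow \eqref{eqn: L2 to ordinary Fk} is injective, being the restriction of an injective map to a subspace. Since it is surjective, it is an isomorphism, and it is extended by the bottom arrow.

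The key tool is the Pommerening extension theorem. Take a smooth toroidal compactification $\bar X$ of $X$ with simple normal crossing boundary $D$. Because the Baily--Borel boundary has codimension $>1$, Pommerening gives that every holomorphic $k$-form on $X$ with $k<n$ extends holomorphically to $\bar X$. In that range this yields
\[
H^0(\Omega_X^k)=H^0(\Omega^k_{\bar X})\subset H^0(\Omega^k_{\bar X}(\log D)).
\]
The right-hand side is $F^kH^k(X)$ by Deligne's construction, since the log de Rham spectral sequence degenerates at $E_1$ and $F^k$ is its last graded piece. For $k=n$ no extension is needed, because $H^0(\Omega^n_X)$ equals $H^0(K_{\bar X}+D)$ by the Koecher-type principle and Mumford's description of modular forms as log canonical forms.

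The main step is therefore to show that the map $H^0(\Omega^k_X)\to H^k(X)$, sending $\omega$ to its class, is an isomorphism onto $F^kH^k(X)$ once we know $H^0(\Omega_X^k)=H^0(\Omega^k_{\bar X}(\log D))$. Here we use Deligne's theory. Holomorphic log forms on a compact $\bar X$ are closed, and $H^0(\Omega^k_{\bar X}(\log D))\to H^k(X)$ is injective with image $F^kH^k(X)$ by $E_1$-degeneration. For $k<n$ the forms even extend to holomorphic forms on $\bar X$, so their classes lie in the image of $H^k(\bar X)$, which is $W_kH^k(X)$. This is consistent with the surjection \eqref{eqn: L2 to ordinary Fk} and gives $F^kH^k(X)=F^kW_kH^k(X)$ in that range. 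For $k=n$ the right vertical arrow is only an inclusion, but the bottom arrow remains an isomorphism.

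The obstacle I expect is justifying the identification $H^0(\Omega_X^k)=H^0(\Omega^k_{\bar X}(\log D))$ uniformly, because a priori a holomorphic form on $X$ could have essential singularities along $D$. For $k<n$ Pommerening handles this directly. For $k=n$ I would fall back on the standard fact that canonical forms are modular forms of canonical weight, together with the Koecher principle, which gives at worst log poles. Injectivity of the top arrow then follows formally from the commutative square.
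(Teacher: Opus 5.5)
Your argument is correct, but for $k<n$ it rests on a different and heavier input than the paper's proof.

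\textbf{The paper's route.} The paper gets the bottom isomorphism uniformly for every $k\le n$. Since $\Omega^k_{\overline{X}}(\log D)$ is Mumford's canonical extension of the automorphic vector bundle $\Omega^k_X$, the Koecher principle gives $H^0(\Omega^k_X)=H^0(\Omega^k_{\overline{X}}(\log D))\simeq F^kH^k(X)$ at once, with no case split. This is where the codimension $>1$ hypothesis enters.

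\textbf{Your route.} You use Pommerening's extension theorem for $k<n$, and the Koecher principle only in the scalar case $k=n$. The equality with log forms then comes from the chain $H^0(\Omega^k_{\overline{X}})\subset H^0(\Omega^k_{\overline{X}}(\log D))\subset H^0(\Omega^k_X)$, whose two ends agree by Pommerening. You use this sandwich only implicitly, and you should state it.

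\textbf{What each buys.} Your route avoids the vector-valued Koecher principle. It also already yields $F^kH^k(X)=F^kW_kH^k(X)$ for $k<n$, so the lemma essentially absorbs the proof of Theorem~\ref{main}.

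The paper's route keeps the lemma independent of Pommerening's theorem. This is exactly what the subsequent remark needs, namely that Theorem~\ref{main} is \emph{equivalent} to the Pommerening extension theorem. Deducing extension from square-integrability uses the isomorphism $H^0(\Omega^k_X)\simeq F^kH^k(X)$ for $k<n$. If that isomorphism is proved via Pommerening, as in your argument, the converse becomes circular.
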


\begin{proof}
We take a smooth projective toroidal compactification $X\hookrightarrow \overline{X}$ 
with simple normal crossing boundary divisor $D$ (cf.~\cite{Ma1} Appendix). 
By mixed Hodge theory, we have a natural isomorphism 
$F^kH^k(X)\simeq H^0(\Omega_{\overline{X}}^{k}(\log D))$.  
Note that $\Omega_{\overline{X}}^{k}(\log D)$ is the canonical extension of $\Omega_{X}^{k}$ (\cite{Mu} Proposition 3.4). 
By the Koecher principle, we have 
$H^0(\Omega_{X}^{k}) = H^0(\Omega_{\overline{X}}^{k}(\log D))$. 
Therefore we obtain the commutative diagram 
\begin{equation*}\label{cd}
\xymatrix{
H^0_{(2)}(\Omega_{X}^{k}) \ar@{->>}[r] \ar@{^{(}->}[d] & F^kW_kH^k(X) \ar@{^{(}->}[d] \\ 
H^0(\Omega_{X}^{k}) \ar[r]^{\simeq} & F^kH^k(X). 
}
\end{equation*}
This shows that 
$H^0_{(2)}(\Omega_{X}^{k}) \to F^kW_kH^k(X)$ 
is injective. 
\end{proof}

The proof of Theorem \ref{main} can be completed as follows. 
Let now $k<n$. 
The Pommerening extension theorem \cite{Po} says that 
 $H^0(\Omega_{X}^{k}) = H^0(\Omega_{\overline{X}}^{k})$, 
which means $F^kH^k(X)=F^kW_kH^k(X)$ 
(see \cite{Ma1} Proposition 5.1). 
By Lemma \ref{lem2}, 
this implies $H^0_{(2)}(\Omega^{k}_{X}) = H^0(\Omega^{k}_{X})$. 
This proves Theorem \ref{main}. 

\begin{remark}
The above argument shows that 
Theorem \ref{main} is actually equivalent to the Pommerening extension theorem. 
In the Siegel modular case, this means that 
Weissauer's square-integrability theorem (\cite{We1} Satz 7) 
is equivalent to the extension theorem of Freitag and Pommerening \cite{FP}. 
\end{remark}

\begin{remark}\label{rmk2}
In the case $k=n$, 
Lemma \ref{lem2} says that we have 
\begin{equation*}
H^0_{(2)}(K_X) = F^nW_nH^n(X) = H^0(K_{\overline{X}}) 
\end{equation*}
inside $H^0(K_X) = F^nH^n(X) = H^0(K_{\overline{X}}(D))$. 
Since 
$H^0(K_{\overline{X}}) \subset H^0(K_{\overline{X}}(D))$ 
is the subspace of cusp forms, 
we recover the well-known fact that 
a canonical form (or a modular form of canonical weight) on $X$ is 
square-integrable if and only if it is a cusp form. 
%In some specific cases, 
%this has been verified by direct calculation (\cite{Fr} Satz III.2.6, \cite{Ma2} Theorem 10.1.(2)). 
Since the canonical weight belongs to the discrete series, 
this would also follow from the result of Wallach \cite{Wa}. 
\end{remark}

%%%%%%% Reference %%%%%%%%%%%%%%%%%%%%%%%%%%%%%

\end{document}